\newtheorem{theorem}{Theorem}[section]
\newtheorem{lemma}[theorem]{Lemma}
\newtheorem{proposition}[theorem]{Proposition}
\theoremstyle{theorem}
\theoremstyle{definition}
\newtheorem{definition}[theorem]{Definition}
\theoremstyle{remark}
\numberwithin{equation}{section}
\newcommand{\ts}{\hspace{.11111em}}
\newcommand{\tts}{\hspace{.05555em}}
\newcommand{\Tcal}{\mathcal{T}}
\newcommand{\Ucal}{\mathcal{U}}
\newcommand{\R}{\mathbb{R}}
\newcommand{\T}{\mathbb{T}}
\DeclareMathOperator{\G}{\operatorname{\mathcal{G}}}  % Riemannian metric 
\DeclareMathOperator{\area}{\operatorname{\mathsf{Area}}\tts}
\DeclareMathOperator{\vol}{\operatorname{\mathsf{Vol}}\tts}
\newcommand{\RP}{\mathbb{R}\mathbb{P}}
\DeclareMathOperator{\sys}{\operatorname{\mathsf{Sys}}\tts}
\DeclareMathOperator{\SR}{\operatorname{\mathsf{SR}}\tts}
\DeclareMathOperator{\FillVol}{\operatorname{\mathsf{Fill} \ts \mathsf{Vol}}\tts} 
\newcommand*\patchAmsMathEnvironmentForLineno[1]{%
  \expandafter\let\csname old#1\expandafter\endcsname\csname #1\endcsname
  \expandafter\let\csname oldend#1\expandafter\endcsname\csname end#1\endcsname
  \renewenvironment{#1}%
     {\linenomath\csname old#1\endcsname}%
     {\csname oldend#1\endcsname\endlinenomath}}% 
\newcommand*\patchBothAmsMathEnvironmentsForLineno[1]{%
  \patchAmsMathEnvironmentForLineno{#1}%
  \patchAmsMathEnvironmentForLineno{#1*}}%
\begin{document}

% \linenumbers

\title{Systolic volume and complexity of 3-manifolds}

\author[L.~Chen]{Lizhi Chen}

\thanks{}

 \address{
\hspace*{0.155in}School of Mathematics and Statistics, Lanzhou University \newline
\hspace*{0.175in} Lanzhou 730000, P.R. China 
}

\email{lizhi@ostatemail.okstate.edu}

\thanks{The work is supported by the Fundamental Research Funds lzujbky-2017-26 for the Central Universities.}

\subjclass[2010]{Primary 53C23, Secondary 57M27}

\keywords{Systolic volume, complexity, aspherical 3-manifolds}

\date{\today}

\begin{abstract}

In this paper, we prove that the systolic volume of a closed aspherical 3-manifold is bounded below in terms of complexity. Systolic volume is defined as the optimal constant in a systolic inequality. Babenko showed that the systolic volume is a homotopy invariant. Moreover, Gromov proved that the systolic volume depends on topology of the manifold. More precisely, Gromov proved that the systolic volume is related to some topological invariants measuring complicatedness. In this paper, we work along Gromov's spirit to show that systolic volume of 3-manifolds is related to complexity. The complexity of 3-manifolds is the minimum number of tetrahedra in a triangulation, which is a natural tool to evaluate the combinatorial complicatedness. 

\end{abstract}

\maketitle 

\tableofcontents

\section{Introduction}

Let $M$ be a closed aspherical manifold endowed with a Riemannian metric $\G$, denoted $(M, \G)$. The homotopy $1$-systole of $(M, \G)$, denoted $\sys \pi_1 (M, \G)$, is the shortest length of a noncontractible loop in $M$. Gromov \cite[Theorem 0.1.A.]{Gro83} proved that on a closed asphercial $n$-manifold $M$, every Riemannian metric $\G$ satisfies the following systolic inequality,
\begin{equation} \label{sys_Gromov}
 \sys \pi_1 (M, \G)^n \leqslant C_n \ts \vol_{\G} (M),
\end{equation}
where $C_n$ is a constant only depending on the manifold dimension $n$. Gromov \cite[Section 4.26.]{Gro07} further verified that the constant $C_n$ is related to the topology of the manifold $M$,
\begin{equation} \label{sys_ine_top}
 \sys \pi_1 (M, \G)^n \leqslant \left|\textbf{ Top }\right| \ts \vol_{\G} (M) ,
\end{equation}
where $\left| \textbf{Top} \right|$ is some measure of the topological complexity of $M$. In this paper, we work along Gromov's spirit to show that the constant $\left| \textbf{Top} \right|$ is dependent on complexity defined in terms of triangulation of a closed aspherical 3-manifold.
% In this paper, we show that the constant $| \textbf{Top} |$ can be dependent on the triangulation complexity in dimension three.  

% In \cite[Section 4.26]{Gro07} , Gromov summarized systolic inequality for the following type,
% \begin{equation} \label{sys_ine_top}
%  \sys \pi_1 (M, \G)^n \leqslant \left|\textbf{ Top }\right| \ts \vol_{\G} (M),
% \end{equation}
% where $\left| \textbf{Top} \right|$ is some measure of the topological complexity of $M$. In this paper, we show that the constant $| \textbf{Top} |$ can be dependent on the triangulation complexity in dimension three.  

Examples of systolic inequalities of type $(\ref{sys_ine_top})$ include the case of closed hyperbolic surfaces $\Sigma_g$ of genus $g$,
\begin{equation*}
 \sys \pi_1(\Sigma_g, \G)^2 \leqslant C \ts \frac{\log^2{g}}{g} \area_{\G} (\Sigma),
\end{equation*}
where $C$ is a fixed constant. The constant $| \textbf{Top} |$ here depends on the genus $g$. A generalization to higher dimensional aspherical $n$-manifolds $M$ is realized by using simplicial volume,
\begin{equation} \label{sys_SimVol}
 \sys \pi_1 (M, \G)^n \leqslant C_n \ts \frac{\log^3{(1 + \|M\|)}}{\| M \|} \ts \vol_{\G}(M),
\end{equation}
where $C_n$ is a constant only depending on the dimension $n$. The simplicial volume $\| M \|$ is defined as Gromov norm of the fundamental class $[M]$, that is, 
\begin{equation*}
 \| M \| = \inf \sum_{i} |a_i|, 
\end{equation*}
where the infimum is taken over all cycles $\sum_{i} a_i \sigma_i$ representing the fundamental class $[M]$ in $H_n (M; \R)$. Moreover, the constant $| \textbf{Top} |$ in (\ref{sys_ine_top}) can be dependent on simplicial height $h(M)$, 
\begin{equation} \label{sim_height}
 \sys \pi_1(M, \G)^n \leqslant C_n \ts \frac{\exp{ \left( C_n^{\prime} \sqrt{ \log{h(M)} } \right) }}{h(M)} \vol_{\G} (M), 
\end{equation}
where $C_n$ and $C_n^{\prime}$ are constants only depending on the dimension $n$, see Gromov \cite[Section 3.C.3.]{Gro92} and \cite[Section 4.26.]{Gro07}. The simplicial height $h(M)$ of a closed manifold $M$ is defined to be the minimum number of simplices in a geometric cycle representing the fundamental class $[M]$. 

Moreover, Gromov \cite[Section 3.C.3.]{Gro92} also defined modified simplicial height $h_{+}(M)$ of a closed aspherical manifold $M$ as
\begin{equation*}
 \inf \sum_{i} |\lambda_i | ,
\end{equation*}
where the infimum is taken over all integral cycles $\sum_{i} a_i \sigma_i$ representing the fundamental class $[M]$. He further mentioned in \cite[Section 3.C.3.]{Gro92} that it is desirable to have an inequality of type similar to (\ref{sys_SimVol}) or (\ref{sim_height}). In this paper, we provide a partial answer to this problem in dimension three. Matveev \cite{Mat90} defined complexity of closed 3-manifolds. The complexity is a topological invariant equal to the minimum number of tetrahedra in a triangulation, if $M$ is a closed irreducible 3-manifold other than $S^3, \RP^3$ and the lens space $L(3, 1)$, see Section 3. Hence according to the above definitions, Gromov's modified simplicial height coincides with complexity, if the closed $3$-manifold $M$ is irreducible and not $S^3$, $\RP^3$ and $L(3, 1)$. In this paper, we show that the modified simplicial height, or complexity, satisfies inequality of the form like (\ref{sim_height}), if $M$ is a $3$-manifold described above.  

The optimal constant in a systolic inequality is usually called systolic volume, see \cite{Sa06, Sa07}.
\begin{definition}
The systolic volume of a closed aspherical $n$-dimensional manifold $M$, denoted by $\SR(M)$, is defined as
\begin{equation*}
 \inf_{\G} \frac{\vol_{\G}(M)}{\sys \pi_1(M, \G)^n},
\end{equation*}
where the infimum is taken over all Riemannian metrics $\G$ on $M$. 
\end{definition}
It is proved by Babenko \cite{Ba92} that the systolic volume $\SR(M)$ is a homotopy invariant. The main theorem of this paper is that the systolic volume of a closed aspherical 3-manifold has a lower bound depending on complexity.
\begin{theorem}  \label{main_thm}
 Let $M$ be a closed aspherical $3$-manifold. The systolic volume $\SR(M)$ satisfies
 \begin{equation}
  \SR(M) \geqslant C \ts \frac{c(M)}{\exp{ \left( C^{\prime} \ts \sqrt{ \log{ c(M) } } \right) }} ,
  \label{main_ine}
 \end{equation}
 where $C$ and $C^{\prime}$ are two constants which can be explicitly calculated. 
\end{theorem}

In the proof of Theorem \ref{main_thm}, we construct nerve of open cover of a smooth triangulation $K$ of $M$. The covering argument and method of nerves are used in Gromov \cite{Gro83} ( also see Guth \cite{Gu11} ). In terms of the construction of nerve, we can have a lower bound of systolic volume depending on the number of simplices in a polyhedron. The Alexandroff map from the triangulation $K$ to the nerve can be made to be simplicial, so that there is a comparison of the simplices in triangulation $K$ and the nerve. Hence we get the estimate of systolic volume in terms of complexity. 

The balls $B(x, r) \subset K$ in the cover used to construct nerve has the following property,
\begin{equation*}
 \vol ( B(x, r) ) \geqslant C \ts r^n
\end{equation*}
holds when $r \in (0, \frac{1}{2} \sys \pi_1 (K)) $, where $B(x, r)$ is the ball with center $x$ and radius $r$, and $C$ is a constant. In order to have such an estimate of volume of balls, we embed $K$ isometrically into the Banach space $L^{\infty}$. Here $L^{\infty}$ is the space of all bounded Borel functions on $K$, which appears in Gromov's proof of the systolic inequality, see \cite{Gro83}. Such an embedding is called Kuratowski embedding. Moreover, there is a finite approximation of the embedding. The estimate of volume of balls holds since there are isoperimetric inequalities in the Banach space $L^{\infty}$, see \cite{Gro83} and \cite{AmKa11}. 

This paper is organized as follows. Section 2 contains a brief review to the present research progress of the systolic volume. In Section 3, we introduce the complexity invariant of 3-manifolds. In Section 4, Gromov's work on Kuratowski embedding and isoperimetric inequality in Banach spaces are introduced. They are used in the proof of the main theorem of this paper. In Section 5, we explain how to construct cubic complexes and $\delta$-extensions in the Banach space. In Section 6, good open cover and the associated nerve is presented. Construction of the nerve is a main technique in the proof of main theorem. The proof of main Theorem \ref{main_thm} is given in Section 7.

\section{Systolic volume of aspherical manifolds}

Study of systolic volume occupies a main part in the research of systolic geometry, see \cite{CrK03} or \cite{Ka07}. Problems related to the systolic volume include calculating exact values, optimal metrics realizing systolic volume, topological properties related to systolic volume. Note that in literature, the systolic volume is sometimes called optimal systolic ratio ( see Croke and Katz \cite{CrK03}), or systolic constant ( see Brunnbauer \cite{Bru08}, Elmir and Lafontaine \cite{MiLa13}). 

Exact values of systolic volume of surfaces is known for $2$-torus $\SR(\T^2) = \frac{\sqrt{3}}{2}$, real projective plane $\SR(\RP^2) = \frac{2}{\pi}$ and Klein bottle $\SR(\RP^2 \# \RP^2) = \frac{2\sqrt{2}}{\pi}$. Moreover, among all closed manifolds with nonzero systolic volume, we currently only know exact values of systolic volume for these three examples.  

In $2$-dimension, a closed non-simply connected surface $\Sigma$ satisfies
\[ \SR(\Sigma) \geqslant \frac{2}{\pi}, \]
where equality holds if $\Sigma$ is a real projective plane $\RP^2$, see Croke and Katz \cite{CrK03}. Morevoer, if $\Sigma_g$ is a closed orientable hyperbolic surface of genus $g$,
\begin{equation}
 \pi \frac{g}{\left( \log{g} \right)^2} \lesssim \SR(\Sigma_g) \lesssim \frac{9\pi}{4} \frac{g}{\left( \log{g} \right)^2 }
 \label{sr_2}
\end{equation}  
holds when the genus $g$ is large enough, see \cite{KaSa05}. More estimates of systolic volume for surfaces can be found in Croke and Katz \cite[Figure 2.1.]{CrK03}.  

In higher dimensions, Gromov \cite{Gro83} proved that
\begin{equation*}
 \SR(M) \geqslant \frac{1}{ \left( 6 (n+1) n^n \sqrt{(n + 1)!} \right)^n }
\end{equation*} 
holds for aspherical $n$-dimensional manifolds $M$. Wenger's work \cite{Wen08} made an improvement,
\begin{equation*}
 \SR(M) \geqslant \frac{1}{ \left( 6 \cdot 27^n n! \right)^n }.
\end{equation*}
Guth \cite{Gu10} further improved the lower bound estimate of systolic volume,
\begin{equation*}
 \SR(M) \geqslant \frac{1}{(8n)^n},
\end{equation*}
where $M$ is $n$-torus $\T^n$, or real projective $n$-space $\RP^n$, or $n$-manifolds with nice cohomology properties. 

For the relation between systolic volume and other topological invariants, Gromov also showed $\left| \textbf{Top} \right|$ in (\ref{sys_ine_top}) is dependent on Betti numbers $b(M)$,
\begin{equation*}
 \SR(M) \geqslant C_n \ts \frac{b(M)}{\exp{\left( C_n^{\prime} \sqrt{\log{b(M)}} \right)}} , 
\end{equation*}
where $C_n$ and $C_n^{\prime}$ are constants only depending on the dimension $n$. Sabourau extend this result to the connected sum of aspherical manifolds,
\begin{equation} \label{Sa07_connected}
 \SR(\#_k M) \geqslant C_n \ts \frac{k}{\exp{( C_n^{\prime}  \sqrt{\log{k}} )}},
\end{equation}
where $\#_k M$ stands for the connected sum of $k$ copies of the manifold $M$, and $C_n$ and $C_n^{\prime}$ are two constants only depending on the dimension $n$ of the manifold $M$. Babenko and Balacheff \cite{BabBal05} proved an upper bound to the connected sum,
\begin{equation*}
 \SR(\#_k M) \leqslant C \ts \frac{k \ts \sqrt{\log{(\log{k} )}}}{\sqrt{\log{k}}},
\end{equation*}
with $C$ a constant only depending on the manifold $M$. In addition, they conjectured that the systolic volume satisfies 
\begin{equation*} 
 \SR(\#_k M) \geqslant C \ts \frac{k}{\log^n{k}},
\end{equation*}
where $\#_k M$ is the connected sum of $k$ copies of the aspherical $n$-manifold $M$, with $n \geqslant 3$, see \cite{BabBal05}. Sabourau \cite{Sa06} and Brunnbauer \cite{Bru08} showed that the systolic volume of aspherical $n$-manifolds has lower bound in terms of minimal entropy $\lambda (M)$,
\begin{equation*}
 \SR(M) \geqslant C_n \frac{\lambda(M)}{\log^n{(1 + \lambda(M))}},
\end{equation*}
which is another example to the type of inequality (\ref{sys_ine_top}). 

\section{Complexity of $3$-manifolds}

The complexity of 3-manifolds can be considered as a natural topological invariant to evaluate how complicated the manifold is. 

Matveev \cite{Mat90} defined complexity of a compact 3-manifold $M$ as the minimal number of vertices in a special spine of $M$. For a closed irreducible $3$-manifold other than $S^3$, $\RP^3$ and the lens space $L(3, 1)$, the complexity is equal to the minimum number of tetrahedra in a triangulation. In this paper, we define the complexity of closed aspherical 3-manifolds to be minimum number of tetrahedra in a triangulation.  

A triangulation of a $3$-manifold $M$, denoted $(\mathcal{T}, h)$, is a simplicial complex $\mathcal{T}$ with a homeomorphism $h: |\mathcal{T}| \to M$, where $| \mathcal{T} |$ stands for the union of all simplices of $\mathcal{T}$. The union $|\mathcal{T}|$ is called the underlying polyhedron of the triangulation. 
\begin{definition}[Pseudo-simplicial triangulation, see \cite{JaRu03}]
 A pseudo-simplicial triangulation $\mathcal{T}$ of a $3$-manifold $M$ contains
 \begin{enumerate}
  \item a set $\Delta = \{\tilde{\Delta}_i \}$ of disjoint collection of tetrahedra,
  \item a family $\Phi$ of isomorphisms pairing faces of the tetrahedra in $\Delta$ so that if $\phi \in \Phi$, then $\phi$ is an orientation-reversing affine isomorphism from a face $\tilde{\sigma}_i \in \tilde{\Delta}_i$ to a face $\tilde{\sigma}_j \in \tilde{\Delta}_j$, possibly $i = j$.
 \end{enumerate}
 Denote by $\Delta / \Phi$ the space obtained from the disjoint union of the $\tilde{\Delta}_i$ by setting $x \in \tilde{\sigma}_i$ equal to $\phi(x) \in \tilde{\sigma}_j$, with the identification topology. The manifold $M$ is homeomorphic to $|\mathcal{T}| = \Delta / \Phi$.  
\end{definition} 
In a pseudo-simplicial triangulation, two faces of the same tetrahedra possibly are identified. 

\begin{definition}
 Let $M$ be a closed irreducible $3$-manifold. The complexity of $M$, denoted $c(M)$, is the minimal number of tetrahedra in a pseudo-simplicial triangulation. 
\end{definition}
The complexity $c(M)$ has the following finiteness property and additivity property. 
\begin{proposition}[Matveev {\cite[Theorem A, B]{Mat90}}]
 \begin{enumerate}
  \item For any integer $k$, there exist only a finite number of distinct closed irreducible orientable $3$-manifolds of complexity $k$.
  \item The complexity of the connected sum of compact $3$-manifolds is equal to the sum of their complexities. 
 \end{enumerate}
 \label{compl_3}
\end{proposition}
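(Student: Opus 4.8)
The plan is to reduce both parts to finiteness and additivity properties of the combinatorial data defining $c$. For part (1) I would argue directly with pseudo-simplicial triangulations; for part (2) I would pass to Matveev's equivalent description of $c(M)$ as the minimal number of true vertices of an almost simple (in fact special) spine of $M$, and use Casler's theorem that a special polyhedron determines the compact $3$-manifold that thickens it.

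For part (1): if $M$ is closed, irreducible and orientable with $c(M)=k$, then by definition $M$ carries a pseudo-simplicial triangulation with exactly $k$ tetrahedra. Such a triangulation is encoded by finite combinatorial data, namely a pairing of the $4k$ triangular faces of $k$ abstract tetrahedra together with, for each pair, one of the three orientation-reversing affine identifications of the two faces. The number of such encodings is bounded by a function of $k$ alone, and each encoding determines the quotient space $\Delta/\Phi$, hence the homeomorphism type of $M$. Hence only finitely many closed $3$-manifolds are produced from $k$ tetrahedra, and a fortiori only finitely many closed irreducible orientable ones have complexity exactly $k$. (With Matveev's definition one argues instead that a minimal spine may be chosen special, that there are only finitely many special polyhedra with at most $k$ true vertices up to homeomorphism, and that each of them thickens, by Casler's theorem, to a unique compact $3$-manifold whose spherical boundary is then capped with a ball; the three exceptional manifolds $S^3$, $\RP^3$, $L(3,1)$ are counted separately.)

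For the inequality $c(M_1\#M_2)\le c(M_1)+c(M_2)$ in part (2): starting from special spines $P_i$ of $M_i$ with exactly $c(M_i)$ true vertices, I would build a spine $Q$ of $M_1\#M_2$ by a standard local construction — puncture a $2$-component of each $P_i$ and splice the punctured cells together into a single nonsingular sheet (equivalently, join the two spines by a suitable tube inside a $2$-component) — so that the true vertices of $Q$ form exactly the disjoint union of the true vertices of $P_1$ and $P_2$; the splicing introduces new triple curves but no new true vertex. Capping a regular neighborhood of $Q$ with a ball recovers $M_1\#M_2$, so $c(M_1\#M_2)\le t(Q)=c(M_1)+c(M_2)$.

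The reverse inequality $c(M_1\#M_2)\ge c(M_1)+c(M_2)$ is the main obstacle, and it is where irreducibility of the factors is used. I would take a minimal special spine $P$ of $M=M_1\#M_2$ together with the essential $2$-sphere $S\subset M$ realizing the connected sum, put $S$ in general position so that $S\cap P$ is a graph, and then use innermost-disk and innermost-circle arguments, together with the irreducibility of the $M_i$, to isotope $S$ and simplify $S\cap P$. Minimality of $P$ blocks the simplifications of $P$ that these arguments would otherwise force, so the procedure should terminate with $S$ in a controlled position relative to $P$. Cutting $M$ along $S$ and capping off then splits $P$ into almost simple polyhedra $P'$ and $P''$ that are spines of $M_1$ and $M_2$ — modulo the exceptional cases $S^3$, $\RP^3$, $L(3,1)$, which are handled by hand — with $t(P')+t(P'')=t(P)=c(M)$, giving $c(M_1)+c(M_2)\le c(M)$. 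The hard part will be controlling the intersection $S\cap P$ and checking that the resulting pieces are genuine spines of the factors: this requires the structure theory of minimal special spines and a somewhat lengthy case analysis, and it is the step I expect to absorb most of the effort.
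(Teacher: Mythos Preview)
The paper does not supply a proof of this proposition at all; it is quoted with attribution to Matveev \cite{Mat90} and used as a black box. There is therefore no argument in the paper to compare your attempt against.

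That said, your sketch is a faithful outline of Matveev's original proof: finiteness in (1) via the bounded combinatorics of face-pairings on $k$ tetrahedra (or, equivalently, finitely many special polyhedra with $\le k$ true vertices together with Casler's thickening theorem), and additivity in (2) via minimal special spines, with the easy direction $c(M_1\#M_2)\le c(M_1)+c(M_2)$ coming from a splice of spines and the hard direction from putting the connected-sum sphere in general position with respect to a minimal spine and simplifying by innermost-disk arguments. You have correctly flagged the reverse inequality as the place where the real work lies and where the exceptional manifolds $S^3$, $\RP^3$, $L(3,1)$ must be handled separately. One small caveat worth keeping in mind: the paper's working definition of $c(M)$ (minimum number of tetrahedra in a pseudo-simplicial triangulation) differs from Matveev's spine-theoretic definition precisely on those three exceptional manifolds, so the additivity statement is really a theorem about Matveev's invariant; in the paper it is only ever applied to aspherical factors, where the two definitions coincide.
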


The complexity of a 3-manifold measures how complicated a combinatorial description of the manifold must be, see \cite{Mat90}. It is difficult to determine the complexity of 3-manifolds. There are various investigations of upper bounds estimate and lower bounds estimate to complexity. Moreover, exact values of complexity are calculated for some family of infinitely many 3-manifolds. For example, Matveev et al. \cite{MPV09} proved two-sided bounds for some families of hyperbolic 3-manifolds. Anisov \cite{Ani05} calculated exact values of complexity for a family of infinitely many $3$-manifolds with Sol geometry. More results are included in a series of papers by Jaco, Rubinstein and Tillmann, see \cite{JaRuTill09, JaRuTill11, JaRuTill13}. Result of this paper can be seen as one more way to the study of complexity.  

\section{Kuratowski embedding and isoperimetric inequality}

A manifold with a distance function can be embedded into a metric space by Kuratowski embedding. Gromov \cite{Gro83} proved the systolic inequality (\ref{sys_Gromov}) by using Kuratowski embedding and generalized isoperimetric inequality in Banach spaces. 

Let $M$ be a closed connected $n$-dimensional manifold. The set of all bounded Borel functions $f$ on $M$, denoted $L^{\infty}(M)$, is a Banach space with the norm
\[ \|f\|_{\infty} = \sup_{x \in M} | f(x) |, \; f \in L^{\infty}(M). \] 
Given a distance function $\mathrm{dist}_{M}$, the manifold $M$ becomes a metric space. Define the map $K: M \to L^{\infty}(M)$ by setting
\[ K(x)(w) = dist_M (x, w) \]
for any $w \in M$.
\begin{lemma}[Gromov 1983, \cite{Gro83} Section 2.]
For any $v, w \in M$, the following equality holds,
\begin{equation}
 \text{dist}(v, w) = \| K(v) - K(w) \|_{\infty} .
\end{equation}
 \label{Ku}
\end{lemma}
Lemma \ref{Ku} implies that $K: M \to L^{\infty}(M)$ is an embedding. 
\begin{definition}
 The map $K: M \to L^{\infty}(M)$ is called Kuratowski embedding. 
\end{definition} 

The Kuratowski embedding $K: M \to L^{\infty}(M)$ can be approximated by a finite dimensional embedding. Let $M_0 \subset M$ be an $\varepsilon$-net. Then there exists a $(1+ \varepsilon)$ bi-Lipschitz finite dimensional embedding of $M$, see \cite[Proposition 5.1.]{AmKa11}. 

Gromov defined filling volume of a manifold in \cite{Gro83}. Let $[M]$ be the fundamental class of $M$. For a closed manifold $M$ with a distance function defined on it, the filling volume of a submanifold $V \subset M$, denoted $ \FillVol ( V \subset M ) $,
 is defined to be the lower bound of volumes of the cycles $z$ with $\partial z = [V]$. 
\begin{definition}
 Let $M$ be a closed manifold endowed with a distance function. The filling volume of $M$, denoted $\text{Fill Vol}(M)$, is defined as
 \[ \FillVol (M \subset L^{\infty}(M)).  \]
\end{definition}

Given a Riemannian manifold $(M, \G)$, we have a distance function $dist_{\G}$ induced by the Riemannian metric. Gromov showed the following isoperimetric
inequality in $L^{\infty}(M)$, which is a generalization to Federer and Fleming's isoperimetric inequality in Euclidean space, see \cite{Gro83} or \cite{Gu06} for a reference. 
\begin{theorem}[Gromov \cite{Gro83}]
 Let $M$ be a closed Riemannian manifold. For any $n$-dimensional cycle $z \in L^{\infty}(M)$, 
  \[ \FillVol (z) \leqslant C_n \vol (z)^{\frac{n+1}{n}}, \]
  where $C_n$ is a constant only depending on the cycle dimension $n$. 
\end{theorem}

\section{Cubical complexes and $\delta$-extension}

In this section, we show a tool which is important in the construction of good balls. We will use the cover of such open good balls to construct nerve, see Section 6 in the following. Materials contained in this section are from Gromov \cite[Section 6.2.]{Gro83}. 

A cubical complex is similar to the simplicial complex. The difference is that we take cubes instead of simplexes. Given a manifold $M$ with distance function $\text{dist}_M$, we define a cubic complex in terms of Kuratowski embedding.   

Let $\delta > 0$ be a given constant. The standard $\delta$-cube in $L^{\infty}(M)$ is the set of functions
\[ \{ \varphi(x) | 0 \leqslant \varphi(x) \leqslant \delta, \text{for all} \; x \in M . \} \]
\begin{definition}
 For a given $\delta > 0$, we define a cubical $\delta$-complex to be a metric space $K$, which is partitioned into isometric image of standard $\delta$-cubes, such that any two cubes meet at a face.  
\end{definition}
In a cubical $\delta$-complex, there holds the following isoperimetric inequality. 
\begin{theorem}[Gromov \cite{Gro83}, Section 6.2.B.]
 For a given integer $n = 1, \cdots , $ there exist two positive constants $\mu_n$ and $C_n^{\prime}$, such that every singular $n$-dimensional cycle $z$ in any
 cubical $\delta$-complex $K$ with
 \[ \vol(z) \leqslant \mu_n \delta^n  \]
 bounds a chain $c \subset K$, and
 \begin{equation} \label{iso_cube}
  \vol (c) \leqslant C_n^{\prime} \left[ \vol (z) \right]^{(n+1)/ n} , 
 \end{equation}
such that $c$ is contained in the $\varepsilon^{\prime}$-neighborhood of $z$ for $\varepsilon^{\prime} = C_n^{\prime} \left( \vol z \right)^{1/n}$. 
\end{theorem}

Assume that $M_0 \subset M$ is a finite subset with $N$ elements. The Banach space $L^{\infty}(M_0)$ is isometric to an $N$-dimensional space $\ell^{N}_{\infty}$, where the norm on $\ell^{N}_{\infty}$ is 
\[ \| (x_1, x_2, \cdots , x_N) \|_{\infty} = \sup_{1 \leqslant i \leqslant N} |x_i| . \]
We define a map $I_0: M \to L^{\infty}(M_0)$ as follows,
\[ x \mapsto \varphi_x \]
with $ \varphi_x (w) = \min \{ \delta, \text{dist}_M (x, w) \} $ for any $w \in M_0$.

The finite dimensional Banach space $L^{\infty}(M_0)$ can be decomposed into $\delta$-cubes by using the following hyperplanes
\[ \{ \varphi | \varphi (x) = m \delta , \; x \in M_0 \}, \]
where $m \in \mathbb{N}$ are integers. For each point $x \in L^{\infty}(M_0)$, after specifying a $\delta$-complex structure, there is a unique cube containing $x$, denoted $\square (x)$. 
Assume that $K \subset L^{\infty}(M_0)$. 
For a sufficiently small number $\varepsilon > 0$, we define a Lipschitz map $R_{\varepsilon}: K \to K$ with the Lipschitz constant $1/ ( \frac{1}{2} \delta - \varepsilon)$, which contracts 
each $\varepsilon$-neighborhood of each subcomplex $K_0 \subset K$ onto $K_0$. The map $R_{\varepsilon}$ can be defined in terms of the $1$-dimensional Lipschitz map 
$r_{\varepsilon}: [0, 1] \to [0, 1]$:
\[ R_{\varepsilon} (x_1, x_2, \cdots , x_N) = (r_{\varepsilon}(x_1), r_{\varepsilon}(x_2), \cdots , r_{\varepsilon}(x_N) ). \] 
\begin{definition}
 A $\delta$-extension of $M$, denoted by $K_{\delta}(M)$, is the union
  \begin{equation*}
   \cup_{x \in M} \square (x),
  \end{equation*}
  where $\square(x)$ stands for the unique $\delta$-cube with minimal dimension and containing the point $x^{\prime} = R_{\varepsilon} \circ I_0 (x)$. 
\end{definition}

Let the map $J: M \to K_{\delta}(M)$ be defined as $J = R_{\varepsilon} \circ I_0 $. 
\begin{proposition}
 The image $M^{\prime} = J(M)$ of the map $J$ is homeomorphic to $M$. 
\end{proposition}
\begin{proof}
 The finite dimensional approximation $K_0: M \to L^{\infty}(M_0)$ of the Kuratowski embedding is bi-Lipschitz, see \cite[Proposition 5.1.]{AmKa11}, so that is an embedding. By a slight modification of the proof there, we can see that 
 the map $I_0: M \to L^{\infty}(M_0)$ is a local embedding. Since we let $\varepsilon \leqslant \delta$, and each $\delta$-neighborhood of point $x \in M$ is contractible, so that 
image $R_{\varepsilon}(I_0 (M) )$ of the Lipschitz map $R_{\varepsilon}$ is homeomorphic to $I_0 (M)$.
 Hence $J$ is an embedding.
\end{proof}

Note that the map $J$ is Lipchitz with a Lipschitz constant $\frac{\delta}{\delta - 2 \varepsilon}$. And for the volume, we have
\begin{equation}
 \vol (M^{\prime}) \leqslant \left( \frac{\delta}{\delta - 2 \varepsilon} \right)^n \vol (M) .
 \label{vol_cube}
\end{equation}
The volume of $M^{\prime}$ is defined in terms of the norm in finite Banach space $L^{\infty} (M_0)$, see Gromov \cite[Section 4]{Gro83}. 

\section{Good cover and nerve}

In this section, we introduce the notion of nerve of cover of open balls. The estimate of using nerve of covers can be found in Guth \cite{Gu11} and Gromov \cite[Section 5 and Section 6]{Gro83}. 

Let $X$ be a topological space. 
\begin{definition}
 The nerve $N(\mathcal{U})$ of an open cover $\mathcal{U} = \{U_{\lambda}\}_{\lambda \in \Lambda}$ is a simplicial complex, which satisfies:
 \begin{enumerate}
  \item there is a vertex $v_{\lambda}$ for each open subset $U_{\lambda} \subset \mathcal{U}$; 
  \item there is a $k$-simplex spanned by vertices $v_{\lambda_1}, v_{\lambda_2}, \cdots , v_{\lambda_k}$, if the corresponding subsets have nonempty intersection, $U_{\lambda_1} \cap U_{\lambda_2} \cap \cdots \cap U_{\lambda_k} \neq \emptyset$. 
 \end{enumerate}
\end{definition}
Denote the underlying polyhedron of $N(\Ucal)$ by $N$.

\begin{definition}
 An open cover $\Ucal$ of $X$ is called a good cover, if each subset $U \in \Ucal$ is contractible.
\end{definition}
\begin{theorem}[Nerve Theorem, Hatcher \cite{Ha02} Corollary 4G.3.]
 If $X$ is a paracompact topological space, and $N$ is the underlying polyhedron of the nerve $N(\Ucal)$ of a good cover $\Ucal$ on $X$, then $N$ is homotopy equivalent to $X$.
 \label{nerve_thm}
\end{theorem}

Given a closed $n$-dimensional manifold $M$. Let $\Ucal$ be an open good cover on $M$. Then the underlying polyhedron $N$ of the nerve $N(\Ucal)$ is homotopy equivalent to $M$, according to the above nerve theorem. From the manifold $M$ to the underlying polyhedron $N$ of the nerve $N(\Ucal)$, there exists canonical Alexandroff map $A: M \to N(\Ucal)$, see Karoubi and Weibel \cite[Proposition 2.1.]{KaWe16} or references therein. Assume that $\{ a_i \}$ is a partition of unity subordinate to the open cover $\Ucal$, then the Alexandroff map $A$ takes each point $x \in M$ to a point with barycentric coordinate $a_i (x) \in N$. 

Moreover, if $K$ is the underlying polyhedron of a triangulation of $M$, the Alexandroff map from the manifold to the nerve can be made into simplicial. Let $\Ucal$ be an open good cover of open balls over $K$. Moreover, assume that each vertex of the underlying polyhedron $K$ is the center of some open ball in the cover $\Ucal$. Hence the Alexandroff map is simplicial, if every ball $B \in \Ucal$ with center a vertex of $K$ does not contain any other vertices of $K$. We take $\varphi$ to be the Alexandroff map from $K$ to $N$.    
\begin{lemma}
 For the underlying polyhedron $K$ of each triangulation of $M$, there exists a nerve $N(\Ucal)$ of an open good cover $\Ucal$ of $M$, whose underlying polyhedron $N$ has more vertices than $K$. 
\end{lemma}
\begin{proof}
 As mentioned above, there exists a simplicial map $\varphi: K \to N$ for the nerve $N(\Ucal)$ of the open good cover $\Ucal$ over $K$. Furthermore, with respect to different vertices $v_1, v_2 \in K$, the images $\varphi (v_1)$ and $\varphi_2(v_2)$ are distinct due to our previous assumption. Hence in the underlying polyhedron $N$ of the nerve $N(\Ucal)$, there will be more vertices than $K$.  
\end{proof}  

For a closed Riemannian $n$-manifold $(M, \G)$, there exists an open good cover $\Ucal = \{ B_{x_i}(r_i) \}$, where $B_{x_i}(r_i)$ is an open ball with center $x_i$ and radius $r_i$. 
\begin{definition}
 Let $R_0 = \frac{1}{10} \sys \pi_1(M, \G)$. A ball $B_x(r)$ with radius $0 < r \leqslant R_0 $ is called admissible, if there exists a constant $\alpha \geqslant 1$ so that 
 \begin{enumerate}
  \item \[ \vol_{\G}(B_x(5 r)) \leqslant \alpha \vol_{\G}(B_x(r)), \] 
  \item \[ \vol_{\G}(B_x (5 r^{\prime})) \geqslant \alpha \vol_{\G}(B_x (r^{\prime})) \]
   holds for any $r< r^{\prime} \leqslant R_0$.
 \end{enumerate} 
\end{definition}
\begin{proposition}
 For any point $x \in M$, there exists a radius $ 0 < r \leqslant R_0 $, such that $B_x (r)$ is an admissible ball.
\end{proposition}
\begin{proof}
 The existence of admissible ball is true, since when $r$ is sufficiently small, the ball $B_x(r)$ is close to an Euclidean ball. 
\end{proof}

Let $\widetilde{\Ucal} = \{ B_{x_j}(r_j) \}_{j = 1}^N$ be a maximal system of disjoint admissible balls in $M$, so that 
\begin{enumerate}
 \item  $B_{x_1}( r_1)$ is the admissible ball of the largest radius $r_1$, and $r_1 \geqslant r_2 \geqslant \cdots r_N$;
 \item $\Ucal = \{ B_{x_j} (2 r_j) \}$ is an open cover of the manifold $M$. 
\end{enumerate}
It is easy to show that $\Ucal$ is a good cover. Therefore, the associated nerve $N(\Ucal)$ has its underlying polyhedron $N$ homotopy equivalent to $M$. 
Moreover, we have a simplicial map $\varphi: K \to N$ from the underlying polyhedron $K$ of a triangulation of the manifold $M$ to N. 

\section{Proof of the main theorem}

In this section, we prove the main theorem of this paper (Theorem \ref{main_thm}). The proof is separated into two steps.

\subsection{Volume of balls}

Let $M$ be a closed $n$-dimensional aspherical manifold. There exists a sequences of Riemannian metrics $\{ \G_k \}_{k = 1}^{\infty}$, so that 
 \begin{equation*}
  \lim_{k \to \infty} \frac{\vol_{\G_k}(M)}{\sys \pi_1 (M, \G_k)^n} = \SR (M) . 
 \end{equation*}
Normalize the metric $\G_k$ by scaling to let $\sys \pi_1 (M, \G_k) = 1$, so that we have
\begin{equation*}
 \lim_{k \to \infty} \vol_{\G_k} (M) = \SR(M). 
\end{equation*}

Denote by $M_k$ the Riemannian manifold $(M, \G_k)$. Let $\delta > 0$ be a given small positive number. We choose a sufficiently dense finite subset $M_k^{\circ} \subset M_k$. For example, we 
can choose $M_k^{\circ}$ to be an $\varepsilon$-net. For each $k$, there is a $\delta$-extension $K_{\delta}(M_k) \subset L^{\infty} (M_k^{\circ})$, see Section 5. The Riemannian manifold can be embedded into $K_{\delta}(M_k)$ by the Lipschitz map $J$ described in Section 5. In order to specify the distance $dist_k$ induced by the Riemannian metric $\G_k$ on $M_k$, we use the notation $J_k: M_k \to K_{\delta}(M_k)$ to denote the Lipschitz embedding.  

Let $M_k^{\prime}$ be the image of $M_k^{\prime}$ under the Lipschitz map $J_k: M_k \to K_{\delta}(M_k)$. Recall that the length and the volume in the Banach space $L^{\infty}(M)$ is defined in terms of the norm, see Section 4 or Gromov \cite{Gro83}. The volume of $M_k^{\prime}$ satisfies the inequality (\ref{vol_cube}). For the purpose of convenience in the following argument, we vary the Riemannian metric $\G_k$ on $M$ to let the following fact holds,
\begin{equation*}
 \lim_{k \to \infty} \vol (M_k^{\prime}) = \SR(M). 
\end{equation*}

\begin{lemma} \label{volume_ball}
When $k$ is sufficiently large, the volume of every ball $B_w(R)$ in $M_k^{\prime}$ satisfies 
  \begin{equation}
   \vol (B_{w}(R)) \geqslant A_n R^n,  
   \label{ball}
  \end{equation}
where $A_n$ is a constant only depending on the dimension $n$. 
\end{lemma} 

\begin{proof}
 We prove this Lemma by choosing a minimizing sequence of cycles. Such technique is used by Gromov in \cite[Section 4.3. $C^{\prime\prime}$]{Gro83}, also see \cite[Section 6.4.B.]{Gro83}. 
 
 Since the $\delta$-extension $K_{\delta}(M_k)$ is compact, we can assume that there is a subsequence of $\{ M_k^{\prime} \}$ with limit $\widetilde{M^{\prime}}$ of Gromov-Hausdorff convergence. For convenience, we still denote the subsequence by $M_k^{\prime}$. We may further assume that $\widetilde{M^{\prime}}$ is minimal, that is, no proper subset of $\widetilde{M^{\prime}}$ is the limit of any other minimizing sequence. The limit $\widetilde{M^{\prime}}$ of Gromov-Hausdorff convergence here serves for a formal device of the minimizing sequence, see Gromov \cite[4.3.$C^{\prime\prime}$]{Gro83}

In the cubic complex $K_{\delta}(M_k^{\prime})$, there holds the isoperimetric inequality (\ref{iso_cube}), see Section 5.   
 Hence by applying an argument analogous to Gromov \cite[Theorem 4.3.$C^{\prime\prime}$.]{Gro83}, we have
 \begin{equation*}
  \vol (B_{w} (R) ) \geqslant A_n R^n
 \end{equation*}
 holds for every $w \in M_k^{\prime}$ when $k$ is sufficiently large. 
\end{proof}

\subsection{Triangulation and nerve of open cover}

Assume that $k$ is large enough, so that the estimate (\ref{ball}) holds. We construct a nerve $N(\Ucal)$ for an open cover $\Ucal$ of $M_k^{\prime} $. 

Let $R_0 = \frac{1}{10} \sys \pi_1 ( M_k^{\prime} ) $. On $ M_k^{\prime}  \subset K_{\delta}(M_k)$, we choose a system of disjoint admissible balls $ \overline{\Ucal} = \{ B_{x_j} (r_j) \}_{j = 1}^{m}$, see Section 5 for a reference of admissible balls. Moreover, assume that the system $\overline{\Ucal}$ is maximal, and satisfying: 
\begin{enumerate}
 \item $\displaystyle \Ucal = \{ B_{x_j} (2 r_j) \}_{j = 1}^{m}$ is an open cover of $ M_k^{\prime} $; 
 \item each open ball $B_{x_j} (2 r_j)$ does not contain any other open balls in $\Ucal$. 
\end{enumerate}
 
Since the open cover $\Ucal$ is a good cover, the nerve $N(\Ucal)$ is homotopy equivalent to $ M_k^{\prime} $, so that homotopy equivalent to the manifold $M$. Given a smooth triangulation of the manifold $M$, denote by $K$
the underlying polyhedron. In construction of admissible balls in $\overline{\Ucal}$, we assume that for each vertex $v \in K$, the corresponding point in $  M_k^{\prime} $ is chosen to be the center of an admissible ball. Then we have a simplicial map 
$\varphi: K \to N$, where $N$ is the underlying polyhedron of the nerve $N(\Ucal)$. Therefore we proved the following result.
% \begin{proposition}
%  There exists a triangulation of the manifold $M$, such that the number of simplices in the underlying polyhedron is a lower bound to the number of the simplices in $N$. 
%  \label{trian_nerve}
% \end{proposition} 
 \begin{proposition} \label{trian_nerve}
  For each smooth triangulation $\mathcal{T}$ of an aspherical manifold $M$, there exists an open cover so that the underlying polyhedron of the associated nerve has more vertices than the triangulation $\Tcal$. 
 \end{proposition}

\vskip 10pt
 
% Denote by $T$ the number of pairwise intersections in the open cover $\Ucal$. According to the definition of nerve, $T$ is equal to the number of $2$-simplices in the underlying polyhedron $N$ of the nerve $N(\Ucal)$. Denote by $S$ the systolic volume $\SR(M)$.   
% \begin{proposition}
%   The number $T$ satisfies
%   \begin{equation}
%   T \leqslant B_n \ts S \ts \exp{ ( B_n^{\prime} \ts \sqrt{\log{S}} )} ,
%   \label{eq_asphe}
%   \end{equation}
%   where $B_n$ and $B_n^{\prime}$ are two positive constants only depending on the manifold dimension $n$. 
%  \end{proposition}  

Now we show an estimate with respect to the systolic volume. Let the systolic volume be denoted by $S$ in the following.
% We use $S$ to denote the systolic volume. 
 \begin{proposition}
  For each closed aspherical $n$-manifold $M$, there exists an open cover $\Ucal$, so that the following inequality holds for the number $T$ of simplices in the associated nerve and the systolic volume $S$, 
  \begin{equation} \label{eq_asphe}
   T \leqslant B_n \ts S \ts \exp{ ( B_n^{\prime} \ts \sqrt{\log{S}} \ts )} ,
  \end{equation} 
  where $B_n$ and $B_n^{\prime}$ are two constants only depending on the dimension $n$. 
 \end{proposition}

\begin{proof}
The covering argument in the proof is the same as in Gromov \cite[Theorem 5.3.B.]{Gro83}. Another important reference is Gromov \cite[Section 6.4.C.]{Gro83}. 
% The proof is basically same as the argument in the proof of Gromov \cite[Theorem 5.3.B.]{Gro83}, also see Gromov \cite[Section 6.4.C.]{Gro83}. For completeness, we show details in the following. 
 
Denote by $V_k$ the volume $\vol ( M_k^{\prime}  )$. Let $w \in M_k^{\prime} $ and $B_w(R) \subset M_k^{\prime} $ be an admissible ball. Choose the radius $R$ so that
 \begin{equation*}
  5^{- k} R_0 \leqslant R \leqslant 5^{-k + 1} R_0 , 
 \end{equation*}
 where $k \geqslant 1$ is an integer. Recall that $R_0 = \frac{1}{10} \sys \pi_1 ( M_k^{\prime} )$. 
 Since we have
  \begin{align*}
    V_k & \geqslant \vol \left( B_w  \left( \frac{1}{2} \sys \pi_1 ( M_k^{\prime} ) \right) \right) \\
    & = \vol (B_w (5R_0) ) \\ 
   & \geqslant \alpha  \vol (B_w (R_0)) \\
   & = \alpha \vol \left( B_w \left( 5^{k - 1} \cdot 5^{-k + 1} R_0 \right) \right) \\
%    & \geqslant \alpha^2 \vol \left( B_w ( 5^{k - 2} \cdot 5^{- k + 1} R_0 ) \right) \\
    & \;\; \vdots \\
    & \geqslant \alpha^{k} \vol (B_w ( 5^{-k + 1} R_0)) \\
    & \geqslant \alpha^k \vol (B_w (R)) \\
    & \geqslant \alpha^k A_n R^n \\
    & \geqslant \left( \alpha 5^{-n} \right)^k A_n R_0^n ,
  \end{align*}
  so that
   \begin{equation*}
    k \leqslant \frac{\log{V_k} - \log{R_0^n} - \log{A_n} }{ \log{\alpha}  - n }  . 
   \end{equation*}
  Moreover, we choose the constant $\alpha$ to let
   \begin{equation}
    \log{\alpha} - n = \sqrt{\log{S_k} -  \log{A_n}  } , 
    \label{alpha}
   \end{equation}
  where we use $S_k$ to denote the systolic ratio $\frac{V_k}{R_0^n}$. Let $k$ be the greatest integer satisfying the equality (\ref{alpha}).
  
  Let $ \{ B_{w_j}(R_j) \}_{j = 1}^m$ be a maximal system of mutually disjoint admissible open balls, with $R_j \in (0, R_0]$ and satisfying
  \begin{enumerate}
   \item $R_1 \geqslant R_2 \geqslant \cdots \geqslant R_m$;
   \item $\Ucal = \{ B_{w_j} (2 R_j) \}_{j = 1}^{m}$ is an open cover of $M_k^{\prime}$. 
  \end{enumerate}
  
%  Now we cover $M_k^{\prime}$ by a maximal system of pairwise disjoint admissible balls, denoted by $\{ B_{w_j^{\prime}} (R_j) \}_{j = 1}^m$, such that
%  \[ R_1 \geqslant R_2 \geqslant \cdots \geqslant R_m . \] 
% As mentioned earlier, $\{ B_{w_j} (2 R_j) \}_{j = 1}^m$ is an open cover of $M_k^{\prime} $. 

Let $N$ be the nerve associated to this cover. Denote by $m_j$ the number of balls intersecting with $B_{w_j}(R_j)$. Denote by $P$ the number of pairwise intersections in the open cover $\Ucal$.
Then we have
  \begin{align*}
    V_k & =  \vol (  M_k^{\prime}  ) \\
    & \geqslant \sum_{j = 1}^m \vol (B_{w_j} (R_j) ) \\
    & \geqslant \sum_{j = 1}^m \alpha^{-1} \vol (B_{w_j} ( 5 R_j) )  \\
    & \geqslant \alpha^{-1} \sum_{j=1}^m \sum_{\ell = 1}^{m_{j}} \vol (B_{w_{\ell}} (R_{\ell})  )  \\ 
    & \geqslant \alpha^{-1} \sum_{j = 1}^m \sum_{\ell = 1}^{m_j} A_n R_{\ell}^n \\
    & \geqslant \alpha^{-1} A_n \left( 5^{- k} R_0 \right)^n \ts P . \\
  \end{align*}
 Hence we have
  \begin{align*}
   P & \leqslant \frac{V_k}{R_0^n} \, \alpha \, A_n^{-1} \, 5^{nk} \\
    & = 5^n A_n^{-1} \, S_k \, 5^{(n + 1) \sqrt{\log{S_k - \log{A_n}}}} \\ 
    & \leqslant B_n \ts S_k \ts \exp{\left( B_n^{\prime} \ts \sqrt{\log{S_k}} \right)} ,
  \end{align*} 
 where $B_n = 5^n \ts A_n^{-1}$, $B_n^{\prime} = (n+1) \log{5}$ are constants only depending on $n$. 
 
 Denote by $T_i$ the number of $i$-simplices in the underlying polyhedron $N$ of the nerve $N(\Ucal)$, so that $T_2 = P$, and $T = \sum_{i} T_i$. 
 Since $T_0 \leqslant 2P$, and $T_i \leqslant P$ with $i \geqslant 2$, the estimate (\ref{eq_asphe}) also holds for any $T_i$ with $i \geqslant 0$, thus holds for $T$.
 
 If we let $k \to \infty$, then the estimate (\ref{eq_asphe}) is obtained. 
 \end{proof}
 
The inequality (\ref{eq_asphe}) implies that
 \begin{equation}
  \SR(M) \geqslant C_n \frac{T}{ \exp{ \left( C_n^{\prime}  \sqrt{\log{T}}  \right) } } ,
 \end{equation}
 where $C_n$ and $C_n^{\prime}$ are two positive constants only depending on the dimension $n$.
 
 Let $M$ be a closed aspherical $3$-manifold. In terms of Proposition \ref{trian_nerve}, for each triangulation of $M$, there exists an open cover from maximal system of disjoint admissible open balls, so that the number $t$ of tetrahedra in this triangulation is no more than the number $T_3$ of tetrahedra in the nerve. Hence we have
 \begin{equation*}
  \SR(M) \geqslant C \ts \frac{t}{ \exp{ \left( C^{\prime} \ts  \sqrt{\log{t}}  \right) } },
 \end{equation*} 
 where $C$ and $C^{\prime}$ are two positive constants. After taking infimum on $t$, we get the lower bound estimate of systolic volume in Theorem \ref{main_thm}.

\end{document}